\theoremstyle{plain}
\newtheorem{theorem}{Theorem}
\newtheorem{corollary}{Corollary}
\newtheorem{lemma}{Lemma}
\theoremstyle{remark}
\newtheorem*{question}{Question}
\def\N{\mathbb N}
\def\Z{\mathbb Z}
\def\Q{\mathbb Q}
\def\x{\times}
\def\phi{\varphi}
\def\leq{\leqslant}
\def\geq{\geqslant}
\def\emptyset{\varnothing}
\def\rank{\operatorname{rk}}
\def\im{\operatorname{im}}
\def\ker{\operatorname{ker}}
\def\HF{\operatorname{HF}}
\def\HFhat{\widehat{\operatorname{HF}}}
\def\HFKhat{\widehat{\operatorname{HFK}}}
\title{Tight fibred knots without L-space surgeries}
\author{Filip Misev}
\address{Max Planck Institute for Mathematics, Bonn, Germany}
\email{fmisev@mpim-bonn.mpg.de}
\author{Gilberto Spano}
\address{LMNO, Universit\'e de Caen-Normandie, Caen, France}
\email{gilbertospano.math@gmail.com}
\date{}
\begin{document}
\maketitle

\begin{abstract}
We show there exist infinitely many knots of every fixed genus $g\geq 2$ which do not admit surgery to an L-space, despite resembling algebraic knots and L-space knots in general: they are algebraically concordant to the torus knot $T(2,2g+1)$ of the same genus and they are fibred and strongly quasipositive.
\end{abstract}

\thispagestyle{empty}

\section{Introduction and statement of result}
Algebraic knots, which include torus knots, are L-space knots: they admit Dehn surgeries to L-spaces, certain $3$-manifolds generalising lens spaces which are defined in terms of Heegaard-Floer homology \cite{He2}.

The first author recently described a method to construct infinite families of knots of any fixed genus $g\geq 2$ which all have the same Seifert form as the torus knot $T(2,2g+1)$ of the same genus, and which are all fibred, hyperbolic and strongly quasipositive. Besides all the classical knot invariants given by the Seifert form, such as the Alexander polynomial, Alexander module, knot signature, Levine-Tristram signatures, the homological monodromy (in summary, the algebraic concordance class), further invariants such as the $\tau$ and $s$ concordance invariants from Heegaard-Floer and Khovanov homology fail to distinguish these knots from the $T(2,2g+1)$ torus knot (and from each other). This is described in the article~\cite{Mis}, where a specific family of pairwise distinct knots $K_n$, $n\in\N$, with these properties is constructed for every fixed genus $g\geq 2$.

Here we show that none of the $K_n$ is an L-space knot (except $K_0$, which is the torus knot $T(2,2g+1)$ by construction). This implies our main result:

\begin{theorem} \label{thm:L-space}
For every integer $g\geq 2$, there exists an infinite family of pairwise distinct genus $g$ knots $K_n$, $n\in\N$, with the following properties.
\begin{enumerate}
\item $K_n$ is algebraically concordant to the torus knot $T(2,2g+1)$
\item $K_n$ is fibred, hyperbolic and strongly quasipositive
\item $K_n$ does not admit any nontrivial Dehn surgery to a Heegaard-Floer L-space
\end{enumerate}
\end{theorem}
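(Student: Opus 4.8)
The plan is to prove that $K_n$ is not an L-space knot for $n\geq 1$; parts (1) and (2) of the theorem are the properties established in \cite{Mis}, and (3) follows from the non-L-space-knot claim by a standard argument, which I run first. Suppose $S^3_{p/q}(K_n)$ were an L-space for some nontrivial slope $p/q$. An L-space is a rational homology sphere, so $p/q\neq 0$ (as $S^3_0(K_n)$ has $b_1=1$), and $p/q\neq\infty$ by hypothesis. If $p/q>0$, then $K_n$ is an L-space knot by Ozsv\'ath--Szab\'o. If $p/q<0$, then $S^3_{p/q}(K_n)=-S^3_{-p/q}(\overline{K_n})$, and since orientation reversal preserves L-spaces the mirror $\overline{K_n}$ admits a positive L-space surgery, hence is an L-space knot; but $\overline{K_n}$, having genus $g\geq 2$, is nontrivial, so it would satisfy $\tau(\overline{K_n})=g(\overline{K_n})=g>0$, contradicting $\tau(\overline{K_n})=-\tau(K_n)=-g<0$, which holds because $K_n$ is a nontrivial strongly quasipositive knot and hence has $\tau(K_n)=g$. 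Either way we reach a contradiction, so it suffices to show that $K_n$ itself is not an L-space knot.

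For this I would apply the Ozsv\'ath--Szab\'o structure theorem for L-space knots: if $K$ is an L-space knot, then $\HFKhat(K,j)$ is $\Z$ or $0$ in every Alexander grading $j$, each nonzero group supported in a single Maslov grading, so that $\rank\HFKhat(K)=\|\Delta_K\|_1:=\sum_j|a_j|$ with $\Delta_K(t)=\sum_j a_jt^j$. By construction, $K_n$ has the same Seifert form as $T(2,2g+1)$, hence $\Delta_{K_n}=\Delta_{T(2,2g+1)}=t^g-t^{g-1}+\cdots-t^{-(g-1)}+t^{-g}$ and $\|\Delta_{K_n}\|_1=2g+1$. Since $\chi\big(\HFKhat(K,j)\big)=a_j$ always, the inequality $\rank\HFKhat(K_n)\geq 2g+1$ is automatic, and it is enough to prove the strict inequality
\[
\rank\HFKhat(K_n)\ >\ 2g+1\qquad(n\geq 1),
\]
equivalently that $\HFKhat(K_n,j)$ has rank at least $2$ (is supported in at least two Maslov gradings) for some $j$.

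This rank estimate is the core of the argument, and it is where the explicit fibred model of \cite{Mis} must be used: $K_n$ is fibred of genus $g$, its fibre surface $F$ being identified with that of $T(2,2g+1)$, and its monodromy $\phi_n$ is obtained from the periodic monodromy $\phi_0$ of $T(2,2g+1)$ by composition with an $n$-th power of a Dehn twist along a fixed curve $c\subset F$ that is nullhomologous in $F$ --- the nullhomologous condition being what keeps the homological monodromy, and hence the Seifert form, unchanged. From this data I would compute $\HFKhat(K_n)$ in a low Alexander grading, the next-to-top grading $g-1$ (or some middle grading) being the natural candidate, via a doubly-pointed Heegaard diagram subordinate to the open book $(F,\phi_n)$ in the manner of Ozsv\'ath--Szab\'o --- or, alternatively, via the second author's model of the knot Floer homology of a fibred knot in terms of periodic orbits of the monodromy. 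The $n$ additional Dehn twists along $c$ create new intersection points (resp.\ orbits), and the task is to show that at least two of them survive in homology in a fixed Alexander grading, so that $\HFKhat(K_n,j)$ has rank $\geq 2$ there. A more structural variant, available since $c$ is unknotted in $S^3$ and disjoint from $K_0$: the exteriors $S^3\setminus\nu(K_n)$ are the $(-1/n)$-Dehn fillings of one boundary torus of the fixed link exterior $S^3\setminus\nu(K_0\cup c)$, so $\HFKhat(K_n)$ is the sutured Floer homology of that filling with meridional sutures, and one can propagate along the surgery exact triangle over the Farey triangle $\{0,-1/n,-1/(n+1)\}$, controlling the third term via the (computable) knot Floer homology of the image of $T(2,2g+1)$ in $S^2\times S^1$ and exploiting the nonvanishing of the contact invariant in $\HFKhat(K_n,g)$. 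The main obstacle is exactly this last step: the generator count on such a diagram is straightforward, but controlling the differential --- equivalently the relevant holomorphic curves, or the periodic orbits of $\phi_n$ --- well enough to be sure the extra generators are not cancelled is the real content; everything else in the proof is either formal or imported from \cite{Mis}.
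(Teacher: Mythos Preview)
Your reduction to showing that $K_n$ is not an L-space knot and your invocation of the Ozsv\'ath--Szab\'o structure theorem are correct, and you have identified the right target: proving $\rank\HFKhat(K_n,j)\geq 2$ for some Alexander grading $j$. But the proposal stops exactly where the proof begins. You yourself flag that ``controlling the differential\ldots is the real content'' and then do not supply it; this is a genuine gap, not a routine detail.

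There is also a factual error that undermines your proposed surgery strategy. The monodromy $\phi_n$ is \emph{not} obtained from $\phi_0$ by composing with $t_c^{\pm n}$. Rather, one replaces the single twist $t_{\beta_g}$ in the factorisation of $\phi_0$ by $t_{\beta_{g,n}}$ with $\beta_{g,n}=t_c^n(\beta_g)$; equivalently $\phi_n=t_c^n\, t_{\beta_g}\, t_c^{-n}\,\psi$, where $\psi$ is the product of the remaining twists. Since $c$ meets $\alpha_{g-1}$, the twist $t_c$ does not commute with $\psi$, and $K_n$ is \emph{not} obtained from $K_0$ by $(-1/n)$-surgery on $c$. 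So the Farey-triangle iteration you sketch along $c$ is not available in the form stated.

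What the paper does instead is the following. Writing $\phi_n=t_{\beta_{g,n}}\circ\psi$, let $(Y,K)$ be the open book with monodromy $\psi$, which is independent of $n$; then $(S^3,K_n)$ is $(-1)$-surgery on $\beta_{g,n}\subset S\subset Y$. In the surgery exact triangle in Alexander grading $-g+1$, the $0$-surgery term is, by Juh\'asz, the sutured Floer homology of the diagram $(S,\beta_{g,n},\psi(\beta_{g,n}))$, i.e.\ the Lagrangian Floer homology $\HF(\beta_{g,n},\psi(\beta_{g,n}))$ of two curves on the fibre. This is the missing idea: the problem becomes two-dimensional. A second application of the same triangle with $\beta_{g,0}=\beta_g$ gives $\rank\HFKhat(Y,K;-g+1)\leq 2$ (using $K_0=T(2,2g+1)$), hence
\[
\rank\HFKhat(S^3,K_n;-g+1)\ \geq\ \rank\HF(\beta_{g,n},\psi(\beta_{g,n}))-2.
\]
Finally, Seidel's long exact sequence for Dehn twists peels off the twists in $\psi$ one by one and reduces everything to geometric intersection numbers on $S$; in particular $\iota(\alpha_{g-1},\beta_{g,n})=4n$ forces $\rank\HF(\beta_{g,n},\psi(\beta_{g,n}))\geq 16n^2-3$, so $\rank\HFKhat(S^3,K_n;-g+1)\geq 16n^2-5>1$ for $n\geq 1$. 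No differentials or holomorphic curves are analysed directly: the Juh\'asz identification together with Seidel's triangle converts the problem into counting curve intersections on the surface, which is precisely the step your outline left open.
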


We show in fact that the knots $K_n$ constructed in~\cite{Mis} do not have the same knot Floer homology as $T(2,2g+1)$.

Below we briefly introduce the notions of L-spaces, L-space knots, quasipositivity and fibredness and relate our result to recent work by Boileau, Boyer and Gordon on the subject of L-space knots. Section~\ref{sec:fixed points and exact triangles} contains a description of the fibred knots $K_n$ via their monodromy (taken from~\cite{Mis}) and collects the main ingredients from Heegaard-Floer theory and Lagrangian Floer homology used in the proof of our result, which is given in Section~\ref{sec:proof}.

\subsection{L-spaces, quasipositivity and fibredness}
\mbox{L-spaces} are named after {\em lens spaces}, three-dimensional manifolds formed by glueing two solid tori along their boundaries. By definition, a closed $3$-manifold $M$ is an {\em L-space} if it is a rational homology sphere, that is, $H_*(M,\Q)\cong H_*(S^3,\Q)$, and its Heegaard Floer homology has the smallest possible rank: $\rank\HFhat(M)=|H_1(M,\Z)|$. Every lens space (except $S^1\x S^2$, which fails to be a rational homology sphere) is in fact an L-space. More generally, $3$-manifolds with finite fundamental group (the manifolds with elliptic geometry, certain Seifert fibred manifolds) are known to be L-spaces~\cite[Proposition~2.3]{OzSz1}.


A knot $K\subset S^3$ is an {\em L-space knot} if some non-trivial integral Dehn surgery on $K$ yields an L-space. Basic examples include the torus knots and the Berge knots, since they admit lens space surgeries; see~\cite{Mos, Ber, Gre}. In addition, all algebraic knots (the connected links of plane curve singularities, which include all positive torus knots) are L-space knots. This follows from a theorem of Hedden stating that certain cables of L-space knots are again L-space knots~\cite[Theorem~1.10]{He2}, combined with the classical description of algebraic knots as iterated cables of torus knots, involving the Puiseux inequalities (see for example~\cite{EN}).

By work of Ghiggini~\cite{Ghi} and Ni \cite[Corollary~1.3]{Ni1}, \cite{Ni2}, all L-space knots are known to be fibred: they arise as the bindings of open book decompositions of $S^3$. In addition, Hedden proved that the open book associated to an L-space knot (or to its mirror) supports the tight contact structure of $S^3$, under Giroux' correspondence (see~\cite[Theorem~1.2, Proposition~2.1]{He1} and Ozsv\'ath-Szab\'o \cite[Corollary~1.6]{OzSz1}). In~summary, L-space knots are {\em tight fibred} knots.

Besides Dehn surgery, another important construction of $3$-manifolds, starting from a knot $K\subset S^3$, is given by branched covering of $S^3$ branched along $K$. The $n$-fold cyclic branched covering, branched along $K$, is denoted $\Sigma_n(K)$. For example, if $K$ is an alternating knot, $\Sigma_2(K)$ is always an L-space~\cite{OzSz2}.

Recently, Boileau, Boyer and Gordon \cite{BBG} considered L-space knots $K\subset S^3$ with the additional property that $\Sigma_n(K)$ is also an L-space for some $n\in\N$, $n\geq 2$. They deduced strong restrictions on such knots. In order to state their results, we briefly recall the notion of strongly quasipositive knots, which were introduced and first studied by Rudolph in the 80s~\cite{Ru}. Let the symbols $\sigma_1,\sigma_2,\ldots,\sigma_{n-1}$ denote the standard positive generators of the braid group on $n$ strands, $\sigma_i$ corresponding to the braid in which the $i$-th strand crosses over the $(i+1)$-st strand in the direction of the braid's orientation (and no other crossings). A braid $\beta$ is called {\em strongly quasipositive} if it can be written as a product of {\em conjugates} of the $\sigma_i$:
\[ \beta=\prod_{j=1}^d w_j\sigma_{n_j}w_j^{-1}, \]
where $d\in\N$, $n_1,\ldots,n_d\in\{1,\ldots,n-1\}$ and the conjugating words $w_j$ are of the special form
\[ w_j=\sigma_{j-k}\cdots\sigma_{j-1} \]
for some $k$ (depending on $j$). Accordingly, such braids are called strongly quasipositive. Hedden showed that a fibred knot is tight if and only if it is strongly quasipositive~\cite{He1}. Fibredness is important here: there do exist non-fibred strongly quasipositive knots.

\begin{theorem}[Boileau, Boyer, Gordon~\cite{BBG}]
Let $K$ be a strongly quasipositive knot with monic Alexander polynomial. Then
\begin{enumerate}
\item $\Sigma_n(K)$ is not an L-space for $n\geq 6$.
\item If $\Sigma_n(K)$ is an L-space for $2\leq n\leq 5$, then $K$ has maximal signature and its Alexander polynomial is a product of cyclotomic polynomials.
\end{enumerate}
\end{theorem}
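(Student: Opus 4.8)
The plan is to use Rudolph's description of quasipositive surfaces as pieces of smooth complex curves, and then to obstruct the L-space condition by forcing the resulting fillings of the branched covers to be negative definite. Concretely, I would first realise the quasipositive Seifert surface $F$ of $K$ (of genus $g=g(K)$) as a properly embedded holomorphic curve in $B^4$, and form the $n$-fold cyclic cover $X_n$ of $B^4$ branched along $F$. As the branched cover of a Stein domain along a smooth complex curve, $X_n$ is (after the usual smoothing) a Stein domain, hence a Stein filling of $\Sigma_n(K)$ equipped with its induced contact structure $\xi_n$. Since $\chi(X_n)=n\chi(B^4)-(n-1)\chi(F)=1+2g(n-1)$ and a Stein domain has no homology above degree $2$, one gets $b_2(X_n)=2g(n-1)$ as soon as $\Sigma_n(K)$ is a rational homology sphere, i.e. as soon as $\Delta_K$ has no root at an $n$-th root of unity. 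Its signature is controlled by the Levine--Tristram signatures of $K$: by the branched-cover signature formula (Viro, Kauffman--Taylor) for a surface of trivial normal bundle, $\sigma(X_n)=\sum_{j=1}^{n-1}\sigma_{\zeta^{j}}(K)$ with $\zeta=e^{2\pi i/n}$; for $n=2$ this specialises to the classical fact that the double branched cover of $B^4$ along $F$ has intersection form $A+A^{T}$.

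The key step is that an L-space carrying a Stein filling must be negative definite. Indeed, a Stein-fillable contact structure has non-vanishing Heegaard--Floer contact invariant, so $c(\xi_n)\neq0$ in $\HFhat(-\Sigma_n(K))$; and if $\Sigma_n(K)$ is an L-space, $c(\xi_n)$ generates the rank-one group $\HFhat(-\Sigma_n(K),\mathfrak s_{\xi_n})$. The cobordism $X_n\setminus B^4$ from $S^3$ to $\Sigma_n(K)$ carries the contact invariant of $(S^3,\xi_{std})$ to $c(\xi_n)$ by naturality of the contact class under Stein cobordisms, so it induces a non-zero map on $\HFhat$; but a cobordism with $b_2^+>0$ induces the zero map on $\HFhat$. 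Hence $b_2^+(X_n)=0$, and since $\Sigma_n(K)$ is then a rational homology sphere, $X_n$ is negative definite of rank $2g(n-1)$. Comparing with the previous paragraph, $\sum_{j=1}^{n-1}\sigma_{\zeta^{j}}(K)=-2g(n-1)$, and since $|\sigma_\omega(K)|\leq2g$ for every $\omega$, we conclude $\sigma_{\zeta^{j}}(K)=-2g$ for all $j=1,\dots,n-1$.

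The rest is numerical bookkeeping. Taking $j=1$: the step function $\omega\mapsto\sigma_\omega(K)$ equals $0$ at $\omega=1$ and $-2g$ at $\omega=\zeta$, and it only changes at roots of $\Delta_K$, by at most twice their multiplicity; hence $\Delta_K$ has roots of total multiplicity at least $g$ on the open arc from $1$ to $\zeta$, and by the symmetry $\Delta_K(t)=\Delta_K(t^{-1})$ at least $g$ more on the conjugate arc. Since $\deg\Delta_K\leq2g$, these account for all the roots, so every root of $\Delta_K$ lies on the small arc $\{e^{i\theta}:|\theta|<2\pi/n\}$. Using now that $\Delta_K$ is monic, writing its $2g$ roots as $\alpha_k$ gives $1=|\Delta_K(1)|=\prod_k|1-\alpha_k|<(2\sin(\pi/n))^{2g}$ (the inequality being strict because $g\geq1$), which is impossible for $n\geq6$, where $2\sin(\pi/n)\leq1$; this proves (1). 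For $2\leq n\leq5$, if $\Sigma_n(K)$ were an L-space, the same count shows $\Delta_K$ has no root on the arc running from $\zeta$ to $-1$, so $\sigma(K)=\sigma_{-1}(K)=\sigma_{\zeta}(K)=-2g$, i.e. $K$ has maximal signature, equivalently $A+A^{T}$ is negative definite. A Cayley-transform computation then shows all eigenvalues of $A^{-1}A^{T}$, hence all roots of $\Delta_K$, lie on the unit circle, so $\Delta_K$ — a monic integer polynomial with all roots on the unit circle — is a product of cyclotomic polynomials by Kronecker's theorem; this proves (2).

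The conceptual heart, and the likeliest source of difficulty, is the construction in the first two paragraphs: producing an honest Stein (or at least strong symplectic) filling of $\Sigma_n(K)$ from the branched cover, which requires handling the singularities of the affine model $\{z^n=f\}$ and invoking Rudolph's characterisation of quasipositive surfaces with care, and then the assertion that an L-space admitting such a filling is negative definite, which rests on the non-vanishing and naturality of the contact invariant together with the vanishing of $\HFhat$-cobordism maps when $b_2^+>0$. Once definiteness is in hand, the signature comparison and the final number-theoretic step are elementary.
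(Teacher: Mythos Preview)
The paper does not prove this statement; it is quoted from~\cite{BBG} as background. Your outline is essentially the argument of Boileau--Boyer--Gordon: the cyclic branched cover of $B^4$ along a quasipositive Seifert surface (realised, via Rudolph, as a smooth complex curve) is a Stein filling of $\Sigma_n(K)$; the L-space hypothesis then forces $b_2^+=0$ through the non-vanishing and naturality of the contact invariant, the branched-cover signature formula converts this into $\sigma_{\zeta^j}(K)=-2g$ for all $j$, and one finishes by locating the roots of $\Delta_K$ on the short arcs of the unit circle and invoking Kronecker's theorem. The two points you flag as delicate---that the branched cover is genuinely a Stein (or at least strong symplectic) filling, and the $b_2^+$-vanishing argument for L-space boundaries---are precisely the technical inputs that~\cite{BBG} imports from the literature. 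One minor redundancy: once you have shown, in the course of~(1), that all $2g$ roots of $\Delta_K$ lie on the unit circle, Kronecker applies directly in~(2); the Cayley-transform detour is not needed.
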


\begin{corollary}[Boileau, Boyer, Gordon~\cite{BBG}] \label{cor:bbg}
Let $K$ be an L-space knot such that $\Sigma_n(K)$ is an L-space for some $n$. Then
\begin{enumerate}
\item $n\geq 4$ implies that $K$ is the trefoil knot.
\item $n=3$ implies that $K$ is either the trefoil knot or its Alexander polynomial is equal to $t^4-t^3+t^2-t+1$, the Alexander polynomial of the cinquefoil knot. If it is neither the trefoil nor the cinquefoil, it is a hyperbolic knot.
\end{enumerate}
\end{corollary}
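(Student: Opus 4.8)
The plan is to deduce the Corollary from the preceding Theorem, using only the structural facts about L-space knots recalled above. First I would verify that the Theorem applies to every L-space knot $K$. Since $K$ is fibred, by Ghiggini--Ni, its Alexander polynomial is monic. Since $K$ is moreover tight fibred, Hedden's criterion shows that $K$ or its mirror $\overline{K}$ is strongly quasipositive; as $\Sigma_n(\overline{K})$ is the orientation reversal of $\Sigma_n(K)$ and the rank of $\HFhat$ is unchanged under reversing orientation, replacing $K$ by $\overline{K}$ affects neither the hypothesis nor the conclusion, so I may assume $K$ itself is strongly quasipositive. Thus $K$ is a strongly quasipositive knot with monic Alexander polynomial and the Theorem applies.

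Next I would read off the constraints. Part (1) of the Theorem excludes $n\geq 6$, so if $\Sigma_n(K)$ is an L-space with $n\geq 4$ then necessarily $n\in\{4,5\}$; and part (2) shows that in all remaining cases $2\leq n\leq 5$ the knot $K$ has maximal signature $|\sigma(K)|=2g(K)$ and Alexander polynomial a product of cyclotomic polynomials. It then remains to promote these constraints to the stated conclusion, and here I would split according to the geometry of $K$: by geometrisation, $K$ is a torus knot, a satellite, or hyperbolic.

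The torus knot case I would settle by a direct computation. For $K=T(p,q)$ the branched cover $\Sigma_n(K)$ is the Brieskorn manifold $\Sigma(p,q,n)$, which is an L-space if and only if $\tfrac1p+\tfrac1q+\tfrac1n>1$: this is exactly the condition for its base orbifold to be spherical, in which case $\pi_1$ is finite and $\Sigma(p,q,n)$ is an L-space by \cite[Proposition~2.3]{OzSz1}, while otherwise it carries a taut foliation and cannot be an L-space. If $p,q\geq 3$ the left-hand side is at most $\tfrac13+\tfrac14+\tfrac13<1$ for every $n\geq 3$, so no such torus knot qualifies; hence $p=2$ and $q=2g+1$. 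The inequality $\tfrac12+\tfrac1{2g+1}+\tfrac1n>1$ then forces $2g+1=3$ when $n\in\{4,5\}$, giving the trefoil with $\Sigma(2,3,4)$ and $\Sigma(2,3,5)$, and forces $2g+1\in\{3,5\}$ when $n=3$, giving the trefoil and the cinquefoil $T(2,5)$ (whose triple branched cover is the Poincar\'e sphere $\Sigma(2,3,5)$). This reproduces precisely the knots and Alexander polynomials named in the statement.

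The main obstacle is the non-torus case, which is the technical heart of \cite{BBG}. Satellite L-space knots are iterated cables of torus knots, so their cyclic branched covers are graph manifolds; these must be excluded using the characterisation of graph-manifold L-spaces through the absence of taut (horizontal) foliations, the analysis being controlled by equivariant geometrisation of the branched covering. Once satellites are eliminated, the surviving non-torus L-space knots are hyperbolic, and this is consistent with the statement only for $n=3$; for $n\geq 4$ one needs the further input of part (2)---maximal signature together with a cyclotomic, hence severely constrained, Alexander polynomial---to rule the hyperbolic knots out as well and leave only the trefoil. This satellite-and-hyperbolic analysis is where the real difficulty lies, and I would invoke \cite{BBG} for it rather than reprove it.
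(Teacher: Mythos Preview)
The paper does not give a proof of this corollary at all: it is stated with attribution to Boileau--Boyer--Gordon and simply quoted from \cite{BBG}, with no argument supplied. So there is nothing in the paper to compare your sketch against.

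Your proposal goes further than the paper by outlining how the corollary follows from the preceding theorem. The reduction you give (L-space knots are fibred, hence have monic Alexander polynomial; tight fibred knots are strongly quasipositive up to mirroring; branched covers are insensitive to mirroring) is correct and matches how one would check that the theorem's hypotheses are met. Your treatment of the torus-knot case via the spherical/non-spherical dichotomy for Brieskorn spheres $\Sigma(p,q,n)$ is also correct and yields exactly the trefoil and cinquefoil as claimed. Where you openly defer to \cite{BBG}---the elimination of satellite L-space knots and the exclusion of hyperbolic L-space knots for $n\geq 4$---is indeed the substantive content of their paper, and you are right not to attempt it here. In effect, both you and the paper end up citing \cite{BBG} for the result; you simply add an honest sketch of the easy parts.
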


These results suggest that the two properties, admitting an L-space surgery and admitting an L-space branched cover are orthogonal in the sense that only few knots seem to satisfy both properties simultaneously. When applied to genus $g$ knots $K_n$ satisfying the properties of Theorem~\ref{thm:L-space} (and described in the next section), the above Corollary~\ref{cor:bbg} implies that $\Sigma_m(K_n)$ is not an L-space for $m,g\geq 3$ and $n\neq 0$:

\begin{corollary}
Fix $g\geq 3$ and let $K_n$ denote the genus $g$ knot described below. If $m\geq 3$ and $n\neq 0$, the $m$-fold cyclic branched cover $\Sigma_m(K_n)$ of $S^3$, branched along $K_n$ is not an L-space.\hfill $\Box$
\end{corollary}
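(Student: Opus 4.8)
The plan is to deduce the corollary by combining the properties of $K_n$ recorded in Theorem~\ref{thm:L-space} with the theorem and the corollary of Boileau--Boyer--Gordon stated above. Two features of $K_n$ will be used throughout: since $K_n$ is fibred its Alexander polynomial is monic, so the Boileau--Boyer--Gordon theorem applies to it; and $K_n$ is hyperbolic of genus $g\geq 3$, hence distinct from both the trefoil (genus $1$) and the cinquefoil (genus $2$). I also record, for use in the low-index range, that $K_n$ has the same Seifert form as $T(2,2g+1)$: its signature therefore equals $\sigma(T(2,2g+1))$, of maximal absolute value $2g$ for a knot of genus $g$, and its Alexander polynomial equals $\Delta_{T(2,2g+1)}(t)=\sum_{j=0}^{2g}(-1)^jt^j=\prod_{d\mid 2g+1,\,d>1}\Phi_{2d}(t)$, a product of cyclotomic polynomials. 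In particular the conclusions of part~(2) of the Boileau--Boyer--Gordon theorem already hold for $K_n$, so it yields no contradiction directly, and something finer will be needed when $m\in\{3,4,5\}$.

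Assume, for a contradiction, that $\Sigma_m(K_n)$ is an L-space for some $m\geq 3$ and some $n\neq 0$. If $m\geq 6$, this contradicts part~(1) of the Boileau--Boyer--Gordon theorem outright, since $K_n$ is strongly quasipositive with monic Alexander polynomial; so it remains to treat $3\leq m\leq 5$. For this range the plan is to show that an L-space branched cover $\Sigma_m(K_n)$ forces $K_n$ onto the short list of Corollary~\ref{cor:bbg} --- the trefoil when $m\in\{4,5\}$, and the trefoil, the cinquefoil, or a hyperbolic L-space knot when $m=3$ --- and then to eliminate every possibility: $K_n$ is hyperbolic of genus $g\geq 3$, so it is neither the trefoil nor the cinquefoil, and by Theorem~\ref{thm:L-space}(3) it is not an L-space knot, hence not a hyperbolic L-space knot either. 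This contradiction would finish the argument.

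The main obstacle is exactly the step invoking Corollary~\ref{cor:bbg}: as stated, that corollary presupposes $K$ to be an L-space knot, whereas $K_n$ is not. Closing this gap means working with the argument behind Corollary~\ref{cor:bbg} rather than its statement --- the point being that its proof starts from precisely the data that part~(2) of the Boileau--Boyer--Gordon theorem supplies for any strongly quasipositive fibred knot whose $m$-fold branched cover is an L-space with $3\leq m\leq 5$, namely maximal signature together with a cyclotomic Alexander polynomial, and derives the short list from these and the topology of $\Sigma_m(K)$; the L-space-knot hypothesis in the statement then serves only to record which knots actually occur. Verifying that this reasoning applies to $K_n$, with the hypothesis $g\geq 3$ being exactly what rules out the trefoil and cinquefoil branches, is the crux; the remaining steps are routine bookkeeping with the invariants $K_n$ inherits from $T(2,2g+1)$ and the non-L-space-knot property supplied by the main theorem.
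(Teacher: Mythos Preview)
The paper gives no argument beyond the sentence preceding the statement: it asserts that Corollary~\ref{cor:bbg} implies the result and closes with a $\Box$. Your write-up is far more careful, and you have correctly spotted a logical gap that the paper glosses over. Corollary~\ref{cor:bbg}, as quoted, presupposes that $K$ is an L-space knot; by Theorem~\ref{thm:L-space}(3) the knot $K_n$ is not one, so the contrapositive of Corollary~\ref{cor:bbg} yields only the disjunction ``$K_n$ is not an L-space knot or $\Sigma_m(K_n)$ is not an L-space'', and since the first alternative is already known this says nothing about the branched cover. Your case split ($m\geq 6$ via the Boileau--Boyer--Gordon theorem, $3\leq m\leq 5$ via their corollary) and your observation that part~(2) of their theorem yields no contradiction for $K_n$ are both on the mark.

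Where your proposal falls short is exactly where you say it does: you name the repair --- invoking the \emph{argument} of \cite{BBG} rather than the stated corollary, exploiting that the Boileau--Boyer--Gordon reasoning runs through Levine--Tristram signatures and the Alexander polynomial, data that $K_n$ shares with the L-space knot $T(2,2g+1)$ --- but you leave it labelled ``the crux'' without carrying it out. That is the missing step. To complete the argument you must either locate in \cite{BBG} a statement for strongly quasipositive fibred knots (not only L-space knots) that produces the short list directly, or verify that the relevant portion of their proof uses only the algebraic invariants $K_n$ inherits from $T(2,2g+1)$; once that is done, the exclusion of $K_n$ for $g\geq 3$ (not the trefoil, and Alexander polynomial of degree $2g\geq 6\neq 4$) proceeds exactly as you sketch.
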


For $g=2$, the knots $K_n$ have the same Alexander polynomial as the cinquefoil knot $T(2,5)$. The following (open) question was brought to our attention by Ken Baker, Michel Boileau, Marco Golla and Arunima Ray, independently. 

\begin{question}
Is the double branched cover $\Sigma_2(K_n)$ an L-space for any of the knots $K_n$, $n\neq 0$ (mentioned in Theorem~\ref{thm:L-space} and described below)? Is $\Sigma_3(K_n)$ an L-space for any of the genus $g=2$ knots $K_n$?
\end{question}

\subsection*{Acknowledgements}
The authors would like to thank Michel Boileau and Paolo Ghiggini for many useful discussions. FM thanks the Max Planck Institute for Mathematics, Bonn, for its support and hospitality, where part of this work was completed. GS thanks the Laboratoire de Math\'{e}matiques Nicolas Oresme, Caen, for its support and hospitality. 

\section{Monodromies, L-spaces and exact triangles} \label{sec:fixed points and exact triangles}

\subsection{The monodromy of the fibred knots $K_n$} \label{sec:monodromy}
Let us recall the construction of the knots $K_n$ from the article~\cite{Mis}. Throughout, we fix an integer $g\geq 2$, the genus of the knots to be constructed. The fibred knots $K_n$ are given in terms of their monodromies: $K_n$ has a genus $g$ fibre surface~$S_n$. Since the topological type of $S_n\subset S^3$ does not depend on $n$, we can identify $S_n$ with a fixed abstract (non-embedded) surface $S$ and consider the monodromy of $K_n$ as a mapping class $\phi_n:S\to S$. Given a simple closed curve $\gamma\subset S$, we denote $t_\gamma:S\to S$ the right Dehn twist on~$\gamma$. Using this notation, the monodromy $\phi_n$, $n\in\N$, is defined as the following composition of Dehn twists:
\[ \phi_n := (t_{\beta_{g,n}}\circ t_{\beta_{g-1}}\circ\ldots\circ t_{\beta_1})\circ (t_{\alpha_g}\circ\ldots\circ t_{\alpha_1}), \]
where
\[ \beta_{g,n} := t_c^n(\beta_g) \]
and $\alpha_i, \beta_j$ and $c$ are the simple closed curves shown in Figure~\ref{fig:S}.

\begin{figure}[ht]
\includegraphics[scale=1.4]{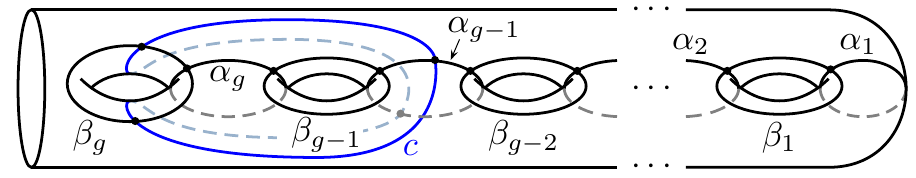}
\caption{The surface $S$ of genus $g$ and one boundary circle with the twist curves $\alpha_i$, $\beta_j$.}
\label{fig:S}
\end{figure}

\noindent The curve $c$, shown in blue, is the boundary of a neighbourhood of $\alpha_g\cup\beta_{g-1}$ in $S$. In particular, it is nullhomologous, intersects $\beta_g$ and $\alpha_{g-1}$ in exactly two points each, and does not intersect any of the remaining curves. It follows that $\beta_{g,n}\cap\alpha_{g-1}$ consists of $4n$ points (see Figure~\ref{fig:intersection}). Since $\beta_g$ and $\alpha_g$ intersect in exactly one point and $\alpha_g$, $c$ are disjoint, $\beta_{g,n}\cap\alpha_g$ is a singleton. Moreover, all pairs of curves involved in the construction realise their minimal geometric intersection number in their homotopy classes. This is clear whenever two curves are disjoint or intersect transversely in a single point. 
For the remaining cases, use the {\em bigon criterion} \cite[Proposition~1.7]{FM}. In particular, this applies to the curves $\beta_{g,n}$ and $\alpha_{g-1}$, whose minimal geometric intersection number $\iota(\alpha_{g-1},\beta_{g,n})=\#(\alpha_{g-1}\cap\beta_{g,n})=4n$ is used in the proof of our Theorem~\ref{thm:L-space}. These two curves are represented in Figure~\ref{fig:intersection} (the shaded band represents $2n$ parallel strands with alternating orientations which are part of $\beta_{g,n}$).

\begin{figure}[ht]
\includegraphics[scale=1.3]{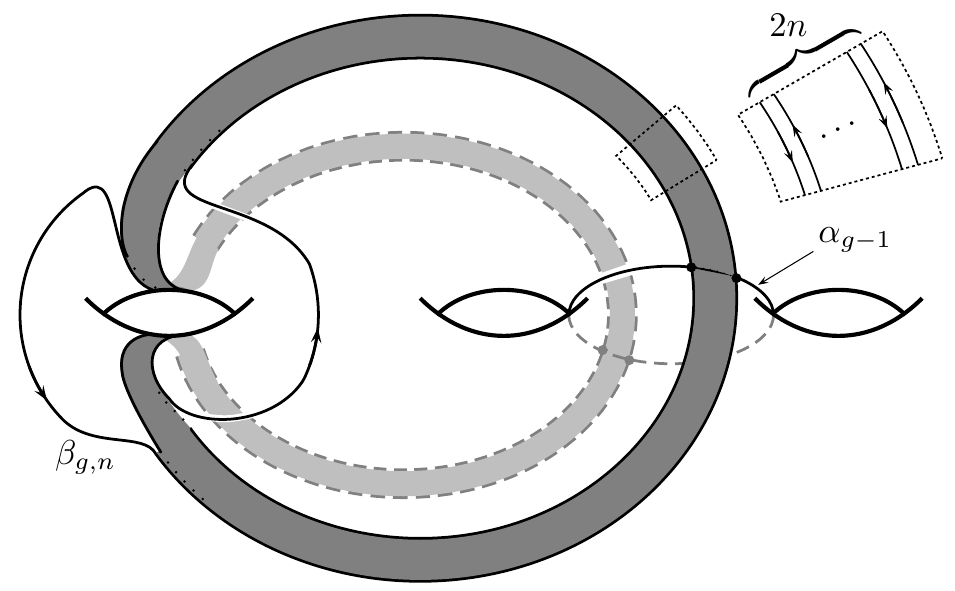}
\caption{The curve $\beta_{g,n}=t_c^n(\beta_g)$ wraps $n$-times around the curve $c$ in both directions which gives a total of $2n$ strands parallel to $c$ (represented by the shaded band). $\alpha_{g-1}$ and $\beta_{g,n}$ intersect in $4n$ points.}
\label{fig:intersection}
\end{figure}
 
\subsection{Floer homology of L-space knots}
The proof of our result, Theorem~\ref{thm:L-space}, relies on the following theorem by Ozsv\'ath-Szab\'o, which implies that the knot Floer homology groups of an L-space knot are at most one-dimensional in each Alexander degree.

\begin{theorem}[Ozsv\'ath-Szab\'o, Theorem~1.2 in~\cite{OzSz1}] \label{thm:OzSz}
Let $K\subset S^3$ be a knot which admits a positive integral L-space surgery. Then, there exists a sequence of integers $n_0=0<n_1<\ldots<n_k$ such that
\[ \HFKhat_d(S^3,K;j)\cong\left\{ \begin{array}{ll} \Z & \text{if $(j,d)=(\pm n_i,\delta_i)$, for some $i$} \\ 0 & \text{otherwise,} \end{array} \right. \]
where the supporting dimensions $\delta_i$ only depend on the $n_i$, according to the recursive formula 
\[ \delta_i = \left\{ \begin{array}{ll} 0 & \text{if}\ i = k \\
\delta_{i+1} - 2(n_{i+1}-n_i)+1 & \text{if $k-i$ is odd} \\
\delta_{i+1} - 1 & \text{if $k-i>0$ is even.} \end{array} \right. \]
\end{theorem}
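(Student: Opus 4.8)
The plan is to read off the asserted shape of $\HFKhat(S^3,K)$ from the surgery hypothesis by way of the Ozsv\'ath-Szab\'o large surgery formula, which computes $\HFhat$ of a large integral surgery on $K$ out of subquotient complexes of the full knot complex $CFK^\infty(K)$. First I would reduce to a large surgery coefficient. Since the set of slopes $r>0$ for which $S^3_r(K)$ is an L-space is, once nonempty, unbounded above (a consequence of the surgery exact triangle), the hypothesis lets us fix an integer $N$, as large as we please, with $S^3_N(K)$ an L-space.

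Next, recall the large surgery formula: once $N$ is large enough, for every integer $s$ with $|s|\leq N/2$ there is an isomorphism $\HFhat(S^3_N(K),[s])\cong H_*(\widehat{A}_s)$, where $\widehat{A}_s=C\{\max(i,j-s)=0\}$ is a subquotient of $CFK^\infty(K)$ ($i$ the algebraic and $j$ the Alexander filtration level). As $S^3_N(K)$ is an L-space, $|H_1(S^3_N(K))|=N$, and each of its $N$ $\mathrm{spin}^c$ structures contributes rank at least $1$ to $\HFhat$, equality of ranks forces $\rank H_*(\widehat{A}_s)=1$ for all $|s|\leq N/2$; letting $N\to\infty$, this holds for every $s\in\Z$.

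From here I would extract the structure of $CFK^\infty(K)$. Using the natural chain maps $\widehat{v}_s,\widehat{h}_s\colon\widehat{A}_s\to\widehat{CF}(S^3)$, which are quasi-isomorphisms for $s\gg0$, respectively $s\ll0$, together with $\rank H_*(\widehat{A}_s)=1$ for all $s$ and the Alexander-filtration spectral sequence — whose $E_1$-page is $\HFKhat(S^3,K)$ and which converges to $\HFhat(S^3)\cong\Z$ — a diagram chase shows that $\HFKhat(S^3,K;j)$ has total rank at most $1$ for each $j$, that the set of $j$ with $\HFKhat(S^3,K;j)\neq0$ is a finite set symmetric about $0$, say $\{0=n_0<n_1<\dots<n_k\}$ together with its negatives, and that $CFK^\infty(K)$ is filtered chain homotopy equivalent to a \emph{staircase complex} with $2k+1$ generators whose $i=0$ representatives have Alexander gradings $\pm n_i$ and are joined at each consecutive step by a single nonzero vertical, resp. horizontal, arrow. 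Normalising so that the top generator (Alexander grading $n_k=g$) has Maslov grading $\delta_k=0$, the Maslov grading of each successive staircase generator is then dictated by the step it sits on: a horizontal step between Alexander gradings $n_{i+1}$ and $n_i$ forces a Maslov drop of $2(n_{i+1}-n_i)-1$ and a vertical step a drop of $1$, with the steps alternating in type and the $i$-th step's type governed by the parity of $k-i$; this, together with the conjugation symmetry $\HFKhat_d(S^3,K;j)\cong\HFKhat_{d-2j}(S^3,K;-j)$, is exactly the stated recursion for the $\delta_i$.

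The main obstacle is the large surgery formula invoked in the second step: identifying $\HFhat(S^3_N(K),[s])$ with $H_*(\widehat{A}_s)$ requires the full knot surgery machinery and careful control of the $\mathrm{spin}^c$ labelling and of the Maslov grading shifts. A secondary, combinatorial, difficulty is passing from the bare fact that every $H_*(\widehat{A}_s)$ has rank one to the rigid staircase model, and then tracking the horizontal/vertical alternation precisely enough to recover the parity-dependent recursion for the supporting Maslov gradings $\delta_i$.
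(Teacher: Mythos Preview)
The paper does not contain a proof of this statement: it is quoted verbatim as Theorem~1.2 of Ozsv\'ath--Szab\'o~\cite{OzSz1} and used as a black box in the proof of Theorem~\ref{thm:L-space}. There is therefore nothing in the paper to compare your proposal against. Your sketch is a reasonable outline of the original Ozsv\'ath--Szab\'o argument via the large surgery formula and the subquotient complexes $\widehat{A}_s$, but any detailed comparison would have to be made against~\cite{OzSz1} rather than the present paper.
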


We make use of two exact triangles in knot Floer homology and in Lagrangian Floer homology to bound the rank of the knot Floer homology groups of our knots $K_n$ from below. Since the $K_n$ are all fibred of genus $g$, their knot Floer homology groups in Alexander degree $\pm g$ have rank one. To prove that $K_n$ is not L-space for $n>0$ we will show that
\[\rank\HFKhat(S^3,K_n;-g+1) > 1,\]
which contradicts the condition of last theorem. Note also that it does not suffice to consider the coefficients of the Alexander polynomial of the genus $g$ knot $K_n$, since it equals the Alexander polynomial of the torus knot $T(2,2g+1)$, whose coefficients are all $\pm 1$.

\subsection{Two exact triangles in knot Floer homology and in Lagrangian Floer homology}
Let $Y$ be a closed oriented $3$--manifold and let $K \subset Y$ be a genus $g$ fibred knot with associated monodromy $\varphi \colon S \rightarrow S$, so that, if $\mathcal{N}$ is a tubular neighbourhood of $K$, $Y \setminus \mathcal{N}$ is homeomorphic to the mapping torus $(S \times [0,1])/((x,1) \sim (\varphi(x),0))$. An essential simple closed curve $\gamma \subset S$ can then be identified with a knot $L_{\gamma} \subset S \times \{0\} \subset Y$. 
 
Fix $\gamma$ and assume that $L_{\gamma}$ is non-trivial. Let $K' \subset Y' \coloneqq Y_{-1}(L_{\gamma})$ and, respectively, $K_0\subset Y_0 \coloneqq Y_0(L_{\gamma})$ be the knots determined by $K$ in the manifolds obtained via $-1$ and, respectively, $0$-surgery on $L_{\gamma}$. $K'$~is also a genus $g$ fibred knot, with associated fiber $S$ and monodromy $\varphi \circ t_{\gamma}$. On the other hand $K_0$ is not fibred and has genus $g-1$. If $F \subset Y_0$ is a genus $g-1$ Seifert surface of $K_0$, then it is easy to see that $(S,\gamma,\varphi(\gamma))$ is a \textit{sutured Heegaard diagram} for the sutured manifold $Y_0(F)$ (see the definitions 2.4 and 2.10 of \cite{Ju1}). Theorem 1.5 of \cite{Ju1} and the definition of sutured Floer homology imply that
\begin{equation} \label{Isomorphism HFK of K_0 and HF}
 \HFKhat(Y_0,K_0;-g+1) \cong \HF(\gamma,\varphi(\gamma)),
\end{equation}
 where the latter is the \emph{Lagrangian Floer homology} of $(\gamma,\varphi(\gamma))$. This is a homology whose generators of the chain complex are intersection points of $\gamma$ and $\varphi(\gamma)$ (which are assumed to be transverse; see for example \cite{Sei} for the details of the definition). We recall also that if two curves $a$ and $b$ are homotopic then
 \begin{equation}\label{Equation: rk = 2}
  \rank\HF(a,b)=2
 \end{equation}
 and if they are not homotopic then 
 \begin{equation}\label{Equation: rk = iota}
  \rank\HF(a,b) = \iota(a,b),
 \end{equation}
where $\iota(a,b)$ denotes the geometric intersection number of the curves $a$ and $b$.

\begin{lemma} \label{lem:ExactTriangleHFK}
There is an exact triangle
\[
\xymatrix{
\HFKhat(Y,K;-g+1) \ar[rr] && \HFKhat(Y',K';-g+1) \ar[dl] \\
& \HF(\gamma,\varphi(\gamma)). \ar[ul] }
\]
\end{lemma}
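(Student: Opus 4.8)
The plan is to obtain the triangle as a version of the surgery exact sequence in Heegaard Floer homology, applied to the curve $L_\gamma$ and carried through the knot $K$.

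First I would observe that the three pairs $(Y,K)$, $(Y',K')$ and $(Y_0,K_0)$ are all obtained from the exterior $Y\setminus\mathcal{N}(L_\gamma)$ by Dehn filling, along the slopes $\infty$, $-1$ and $0$ with respect to a Seifert framing of $L_\gamma$; these three slopes are pairwise at distance one in the Farey graph, so they fit into a surgery exact triangle, in the cyclic order of the statement. Since $L_\gamma$ lies on the fibre $S$, it is disjoint from $K$ and $\mathrm{lk}(K,L_\gamma)=0$, so $K$ determines a null-homologous knot in each filling, with a Seifert surface obtained from $S$: the same surface up to isotopy for $Y'$, and the surface $F$ of genus $g-1$ obtained by compressing $S$ along the surgery disc for $Y_0$ (the genus drops by exactly one because $\gamma$ is non-separating on $S$). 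I would then run the Ozsv\'ath--Szab\'o surgery exact triangle for $\HFhat$ of these three fillings while fixing the two basepoints carried by $K$, turning the triangle maps into maps of knot Floer homologies; equivalently, one can phrase everything in terms of the sutured Floer homology of the three knot exteriors, which are related by surgery on the interior knot $L_\gamma$, and invoke the corresponding surgery triangle for sutured Floer homology. Either way one arrives at an exact triangle
\[ \HFKhat(Y,K)\longrightarrow\HFKhat(Y',K')\longrightarrow\HFKhat(Y_0,K_0)\longrightarrow\HFKhat(Y,K). \]

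The second step is to localise this triangle in Alexander grading $-g+1$. Here $-g+1=-(g-1)$ is the \emph{bottom} Alexander grading of $K_0$, while for the genus $g$ knots $K$ and $K'$ it is the grading just above the bottom grading $-g$. Using the compatible Seifert surfaces above to fix the Alexander gradings, I would analyse the effect of the cobordism maps in the triangle on the relative $\mathrm{Spin}^c$ gradings — in particular pinning down the shift induced by the $2$-handle attachments — and argue that, because $-g+1$ is extremal for $K_0$ and because of the adjunction vanishing $\HFKhat(Y_0,K_0;j)=0$ for $|j|>g-1$ and $\HFKhat(Y,K;j)=\HFKhat(Y',K';j)=0$ for $|j|>g$, the triangle restricts to an exact triangle in this single grading:
\[ \HFKhat(Y,K;-g+1)\longrightarrow\HFKhat(Y',K';-g+1)\longrightarrow\HFKhat(Y_0,K_0;-g+1)\longrightarrow\HFKhat(Y,K;-g+1). \]
Substituting the isomorphism \eqref{Isomorphism HFK of K_0 and HF} for the term $\HFKhat(Y_0,K_0;-g+1)$ then yields the asserted triangle.

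The step I expect to be the main obstacle is precisely this grading bookkeeping: matching the two Alexander gradings on the genus $g$ side with the single bottom grading on the genus $g-1$ side, and controlling the cobordism-induced shift precisely enough that the triangle does localise — with nothing off by one and no spurious mixing of adjacent gradings. The cleanest way I see to sidestep the sign and orientation subtleties is to present all three chain complexes from a single sutured Heegaard multi-diagram extending the diagram $(S,\gamma,\varphi(\gamma))$ of \eqref{Isomorphism HFK of K_0 and HF} — adding the curves that encode $K\subset Y$ and $K'\subset Y'$ — and to read the exact triangle off the associated holomorphic triangle maps, exactly as in the original proof of the surgery exact sequence; on such a diagram the relevant Alexander gradings are directly visible. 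The remaining hypothesis that $L_\gamma$ be a genuine knot, needed for the three fillings to be as described, is guaranteed by the standing assumption that $L_\gamma$ is non-trivial.
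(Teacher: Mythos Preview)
Your approach is essentially the paper's: invoke the Ozsv\'ath--Szab\'o surgery exact sequence for knot Floer homology on the triple $(Y,K)$, $(Y',K')$, $(Y_0,K_0)$ and then substitute the isomorphism~\eqref{Isomorphism HFK of K_0 and HF} for the $Y_0$ term. The paper's proof is a one-line citation of exactly these two ingredients; your additional discussion of the Alexander-grading localisation is not wrong, but the paper simply treats this as part of the cited exact sequence rather than something to be re-derived.
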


\begin{proof}
This is a direct consequence of \eqref{Isomorphism HFK of K_0 and HF} and Ozsv\'ath and Szab\'o's exact sequence for knot Floer homology.
\end{proof}

The second lemma we are going to use is a special case of a theorem due to Seidel. It generalises the well-known identity $\iota(t_a(b),b)=\iota(a,b)^2$ for essential simple closed curves $a,b$ on a surface \cite[Proposition~3.2]{FM}.

\begin{lemma}[Seidel~\cite{Sei}] \label{lem:Seidel}
Let $S$ be a compact oriented surface with boundary and $t_c:S\to S$ the Dehn twist on a simple closed curve $c\subset S$. For any pair of simple closed curves $a,b\subset S$, there is an exact triangle of Lagrangian Floer cohomology groups 
\[ 
\xymatrix{
\HF(t_c(a),b) \ar[rr] && \HF(a,b) \ar[dl] \\
& \HF(c,a) \otimes \HF(b,c) \ar[ul] }
\]
\end{lemma}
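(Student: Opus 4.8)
The plan is to read off this lemma from Seidel's long exact sequence for Dehn twists in the Fukaya category, specialised to the exact symplectic surface $S$; the content is essentially a translation of \cite{Sei}, and the work consists in checking that the hypotheses apply here and in bringing the third vertex into the stated form. First I would set up the categorical picture: an essential simple closed curve $c\subset S$ is an embedded Lagrangian $1$--sphere, hence (the framing of a circle being unique) a spherical object in the derived Fukaya category $\mathcal{F}(S)$, and the geometric right-handed Dehn twist $t_c$ acts on Lagrangian branes exactly as the associated algebraic twist functor $\tau_c$: for any essential simple closed curve $a$, the curve $t_c(a)$ is Hamiltonian isotopic to $\tau_c(a)$, so that $\HF(t_c(a),b)\cong\HF(\tau_c(a),b)$ for every test curve $b$. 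This reduces the lemma to an identity inside $\mathcal{F}(S)$, where bubbling causes no trouble since $S$ has non-empty boundary and is exact.

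Next I would invoke Seidel's twist triangle: there is a distinguished triangle
\[ \HF(c,a)\otimes c \ \xrightarrow{\ \mathrm{ev}\ }\ a \ \longrightarrow\ \tau_c(a) \ \longrightarrow\ \HF(c,a)\otimes c\,[1] \]
in $\mathcal{F}(S)$, with $\mathrm{ev}$ the evaluation map. Applying the cohomological functor $\HF(-,b)$ turns this into a long exact sequence, equivalently into the exact triangle
\[ \HF(\tau_c(a),b)\ \longrightarrow\ \HF(a,b)\ \longrightarrow\ \HF\bigl(\HF(c,a)\otimes c,\ b\bigr)\ \longrightarrow. \]
It then remains to identify the last term. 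Since tensoring by a graded vector space commutes with $\HF(-,b)$ up to dualising, $\HF(\HF(c,a)\otimes c,b)\cong\HF(c,a)^{\vee}\otimes\HF(c,b)$, and Poincar\'e duality for Lagrangian Floer cohomology on the surface $S$ identifies $\HF(c,a)^{\vee}$ with $\HF(a,c)$ and $\HF(c,b)$ with $\HF(b,c)^{\vee}$, up to overall grading shifts. Over the ground field used here these identifications are canonical up to shift, so the third vertex is abstractly $\HF(c,a)\otimes\HF(b,c)$ --- which is all the lemma records, and in our application only the exactness of the triangle together with \eqref{Equation: rk = iota} will be used. Combined with $\HF(t_c(a),b)\cong\HF(\tau_c(a),b)$, this yields the asserted triangle.

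The main obstacle is bookkeeping rather than geometry. One must verify that Seidel's framework genuinely applies in this low-dimensional, boundary setting: exactness of $S$, the fact that $c$ is a framed Lagrangian sphere, and the absence of disc and sphere bubbling --- all automatic for an essential curve on a surface with non-empty boundary. One must also be careful about gradings and about which vertex of the triangle is the mapping cone, as well as about the directions of the Poincar\'e duality isomorphisms. Since only the underlying exact triangle and the ranks of its terms are needed below, the grading subtleties can be suppressed; tracking the shifts through Seidel's sequence and through Poincar\'e duality would give the graded refinement if one wanted it.
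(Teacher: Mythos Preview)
The paper does not prove this lemma at all: it is simply quoted as a special case of Seidel's long exact sequence, with a bare citation to~\cite{Sei}. Your proposal is therefore not competing with any argument in the paper; rather, it is a correct outline of how the statement is extracted from Seidel's work. The route you sketch --- viewing the essential curve $c$ as a framed Lagrangian sphere in the exact surface $S$, invoking the twist triangle $\HF(c,a)\otimes c \to a \to \tau_c(a)$, applying $\HF(-,b)$, and then rewriting the tensor term via Poincar\'e duality --- is exactly the standard derivation, and the exactness and absence of bubbling on a surface with boundary are indeed automatic. The only soft spot is the identification of the third vertex: you pass from $\HF(c,a)^{\vee}\otimes\HF(c,b)$ to $\HF(c,a)\otimes\HF(b,c)$ by invoking duality twice and then silently identifying a space with its dual, which is fine abstractly over a field but not canonical. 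You correctly flag this as bookkeeping and note that only ranks are used downstream, which is all the paper needs.
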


\section{Proof of the main theorem} \label{sec:proof}
\begin{proof}[Proof of Theorem~\ref{thm:L-space}]
Let $\psi\coloneqq t_{\beta_{g-1}}\circ\ldots\circ t_{\beta_1}\circ t_{\alpha_g}\circ\ldots\circ t_{\alpha_1}$, so that $\phi_n=t_{\beta_{g,n}}\circ\psi$. Let $(Y,K)$ be given by the open book $(S,\psi)$. That is, $K\subset Y$ is a fibred knot with monodromy $\psi:S\to S$. Note that both the $3$-manifold $Y$ and the knot $K\subset Y$ are independent of $n$, since $S$ only depends on $g$ and $\psi$ is also independent of $n$. Now consider the open book $(S^3,K_n)$ associated to the fibred knot $K_n$, which is the knot of interest. It is obtained from $(Y,K)$ by $(-1)$-Dehn surgery along the curve $\beta_{g,n}\subset S\subset Y$. We can therefore apply Lemma~\ref{lem:ExactTriangleHFK} to this situation, where $(Y',K')=(S^3,K_n)$.
\[ \xymatrix{\HFKhat(Y,K;-g+1) \ar[rr]^x && \HFKhat(S^3,K_n;-g+1) \ar[dl]^y \\ & \HF(\beta_{g,n},\psi(\beta_{g,n})) \ar[ul]^z} \]
Exactness and the rank-nullity formula for $x,y$ and $z$ imply
\begin{eqnarray}
\rank\HFKhat(S^3,K_n;-g+1) &=& \dim\ker y + \dim\im y \nonumber \\
&=& \dim\im x + \dim\ker z \nonumber \\
&=& \rank\HFKhat(Y,K;-g+1) - \dim\ker x \nonumber \\
&&+ \rank\HF(\beta_{g,n},\psi(\beta_{g,n})) - \dim\im z \nonumber \\
&\geq & \rank\HF(\beta_{g,n},\psi(\beta_{g,n})) - \rank\HFKhat(Y,K;-g+1) \nonumber
\end{eqnarray}
To calculate the rank of $\HFKhat(Y,K;-g+1)$, we apply Lemma~\ref{lem:ExactTriangleHFK} again, now taking $(Y',K')$ to be the open book obtained from $(Y,K)$ by $(-1)$-Dehn surgery along $\beta_{g,0}=\beta_g\subset S\subset Y$. We obtain $Y'=S^3$ and $K'=K_0$. The exact triangle from Lemma~\ref{lem:ExactTriangleHFK} now reads as follows.
\[ \xymatrix{\HFKhat(Y,K;-g+1) \ar[rr] && \HFKhat(S^3,K_0;-g+1) \ar[dl] \\ & \HF(\beta_g,\psi(\beta_g)) \ar[ul]} \]
Since $K_0=T(2,2g+1)$, we know that $\rank\HFKhat(S^3,K_0;-g+1)=1$. Further, the curve $\psi(\beta_g)=t_{\beta_{g-1}}(t_{\alpha_g}(\beta_g))$ intersects $\beta_g$ in exactly one point, whence $\rank\HF(\psi(\beta_g),\beta_g)=\iota(\beta_g,\psi(\beta_g))=1$. Because the above triangle is exact, we deduce 
\begin{equation} \label{Equation: rank of K less than 2}
 \rank\HFKhat(Y,K;-g+1) \leq 2. 
\end{equation}

The missing piece of information is $\rank\HF(\beta_{g,n},\psi(\beta_{g,n}))$. To compute it, observe first that
\[ \psi(\beta_{g,n})=t_{\beta_{g-1}}\circ t_{\alpha_g}\circ t_{\alpha_{g-1}}(\beta_{g,n}), \] because $\beta_{g,n}=t_c^n(\beta_g)$ does not intersect any of the curves $\alpha_i,\beta_i$ for $i\leq g-2$. We could, in principle, directly compute the intersection number by studying the curves $\beta_{g,n}$ and $\psi(\beta_{g,n})$ on the surface $S$. But Lemma~\ref{lem:Seidel} conveniently helps to simplify the calculation. First let us apply it to the curves $(a,b,c)=(\beta_{g,n},\alpha_g,\alpha_{g-1})$, which gives 
\begin{equation}\label{Equation: rank 1 part}
 \rank\HF(t_{\alpha_{g-1}}(\beta_{g,n}),\alpha_g)=\rank\HF(\beta_{g,n},\alpha_g) = 1
\end{equation}
because $\alpha_{g-1} \cap \alpha_g = \emptyset$ (so that the lower term of the exact triangle has rank $0$) and $\beta_{g,n}$ and $\alpha_g$ intersect in a single point.

Now apply Lemma~\ref{lem:Seidel} three times to estimate $\rank \HF(\beta_{g,n},\psi(\beta_{g,n})) = \rank\HF(\psi(\beta_{g,n}),\beta_{g,n})$:

\begin{enumerate}[leftmargin=*]
\item[1.] First apply the lemma to $(a,b,c)=(t_{\alpha_g}t_{\alpha_{g-1}}(\beta_{g,n}),\beta_{g,n},\beta_{g-1})$. Since $\beta_{g-1}\cap\beta_{g,n}=\emptyset$, the lower corner of the exact triangle vanishes and we get:
\[\rank\HF(\psi(\beta_{g,n}),\beta_{g,n})=\rank \HF(t_{\alpha_g} \circ t_{\alpha_{g-1}}(\beta_{g,n}),\beta_{g,n}).\]

\item[2.] Next choose $(a,b,c)=(t_{\alpha_{g-1}}(\beta_{g,n}),\beta_{g,n},\alpha_g)$. The lower term in the exact triangle is $\HF(\alpha_g,t_{\alpha_{g-1}}(\beta_{g,n})) \otimes \HF(\beta_{g,n},\alpha_g)$ which has rank~$1$ by \eqref{Equation: rank 1 part}. The exactness of the triangle implies that:
\[\rank \HF(t_{\alpha_g} \circ t_{\alpha_{g-1}}(\beta_{g,n}),\beta_{g,n}) \geq \rank \HF(t_{\alpha_{g-1}}(\beta_{g,n}),\beta_{g,n}) - 1.\]

\item[3.] Finally, choose $(a,b,c)=(\beta_{g,n},\beta_{g,n},\alpha_{g-1})$. The upper right term in the corresponding triangle is $\HF(\beta_{g,n},\beta_{g,n})$ which has rank~$2$ by equation \eqref{Equation: rk = 2}. It follows that:
\begin{eqnarray*}
 \rank \HF(t_{\alpha_{g-1}}(\beta_{g,n}),\beta_{g,n}) & \geq& \rank \left(\HF(\alpha_{g-1},\beta_{g,n}) \otimes \HF(\beta_{g,n},\alpha_{g-1})\right) - 2\\
						      & =& (\rank \HF(\alpha_{g-1},\beta_{g,n}))^2 -2. 
\end{eqnarray*}
\end{enumerate}
Summing up we get
\[\rank\HF(\psi(\beta_{g,n}),\beta_{g,n}) \geq (\rank\HF(\alpha_{g-1},\beta_{g,n}))^2 -3.\]

On the other hand the computation in Section~\ref{sec:monodromy} gives $\rank\HF(\alpha_{g-1},\beta_{g,n}) = \iota(\alpha_{g-1},\beta_{g,n}) =4n$, so that:
\[\rank\HF(\psi(\beta_{g,n}),\beta_{g,n}) \geq 16n^2 -3.\]

Finally, substituting the last estimation and the one in \eqref{Equation: rank of K less than 2} in the rank inequality obtained at the beginning of this section we get:
\begin{eqnarray*}
 \rank\HFKhat(S^3,K_n;-g+1) &\geq& \rank\HF(\beta_{g,n},\psi(\beta_{g,n})) - \rank\HFKhat(Y,K;-g+1) \\
			    &\geq& 16n^2-5
\end{eqnarray*}
If $n \neq 0$ this quantity is strictly grater than $1$ and Theorem~\ref{thm:OzSz} implies that $K_n$ cannot be an L-space knot. This establishes the third property stated in the Theorem. Properties (1) and (2) are proven in~\cite{Mis}.
\end{proof}


\begin{thebibliography}{AA}



\bibitem{BBG}
	M.~Boileau, S.~Boyer, C.~Gordon: {\em Branched covers of quasipositive links and L-spaces}, Journal of Topology \textbf{12} (2019), 536-576, arXiv:1710.07658.

\bibitem{Ber}
	J.~Berge: {\em Some knots with surgeries yielding lens spaces}, (2018), facsimile of an unpublished manuscript from circa 1990, arXiv:1802.09722.



\bibitem{EN}
	D.~Eisenbud, W.~Neumann: {\em Three-dimensional link theory and invariants of plane curve singularities}, Annals of Math. Studies \textbf{110} (1985), Princeton Univ. Press, Princeton, NJ, ISBN: 0-691-08380-0; 0-691-08381-9.

\bibitem{FM}
	B.~Farb, D.~Margalit: {\em A Primer on Mapping Class Groups}, Princeton University Press, 2012.

\bibitem{Ghi}
	P.~Ghiggini: {\em Knot Floer homology detects genus-one fibred knots}, Amer.~J.~Math. \textbf{130} (2008), no.~5, 1151-1169, arXiv:math/0603445.

\bibitem{Gre}
	J.~E.~Greene: {\em The lens space realization problem}, Ann. Math. (2) \textbf{177} (2013), no.~2, 449-511, arXiv:1010.6257.

\bibitem{He1}
	M.~Hedden: {\em Notions of positivity and the Ozsv\'ath-Szab\'o concordance invariant}, J.~Knot Theory Ramifications \textbf{19} (2010), no.~5, 617-629, arXiv:math/0509499.

\bibitem{He2}
	M.~Hedden: {\em On knot Floer homology and cabling: 2}, International Mathematics Research Notices, Vol. 2009, no.~12, 2248-2274, arXiv:0806.2172.

\bibitem{Ju1}
        A.~Juh\'asz: {\em Floer homology and surface decompositions}, Geom. Topol \textbf{12} (2008) no.~1, 299-350.


\bibitem{Mis}
	F.~Misev: {\em On families of fibred knots with equal Seifert forms}, to appear in Communications in Analysis and Geometry (2017), arXiv:1703.07632.

\bibitem{Mor} 
	H.~R.~Morton: {\em Fibred knots with a given Alexander polynomial}, Knots, braids and singularities (Plans-sur-Bex, 1982), 205-222, Monogr. Enseign. Math. \textbf{31}, Enseignement Math., Geneva, 1983.

\bibitem{Mos}
	L.~Moser: {\em Elementary surgery along a torus knot}, Pacific Journal of Mathematics, \textbf{38} (1971), no.~3, 737-745.


\bibitem{Ni1}
	Yi~Ni: {\em Knot Floer homology detects fibred knots}, Invent. Math. \textbf{170} (2007), no.~3, 577-608, arXiv:math/0607156.

\bibitem{Ni2}
	Yi~Ni: {\em Erratum: Knot Floer homology detects fibred knots}, Invent. Math. \textbf{177} (2009), no.~1, 235-238.

\bibitem{OzSz3}
	P.~Ozsv\'ath, Z.~Szab\'o: {\em Holomorphic disks and knot invariants}, Advances in Mathematics \textbf{186} (2004), no.~1, 58-116.

\bibitem{OzSz1}
	P.~Ozsv\'ath, Z.~Szab\'o: {\em On knot Floer homology and lens space surgeries}, Topology \textbf{44} (2005) 1281-1300.

\bibitem{OzSz2}
	P.~Ozsv\'ath, Z.~Szab\'o: {\em On the Heegaard Floer homology of branched double-covers}, Advances in Mathematics \textbf{194} (2005), no.~1, 1-33.

\bibitem{Ras}
	J.~Rasmussen: {\em Floer homology and knot complements}, Ph.D. Thesis, Harvard University, 2003.

\bibitem{Ru}
    L.~Rudolph: {\em Constructions of quasipositive knots and links, I}, N\oe{}uds, Tresses, et Singularit\'es (ed. C.~Weber), L'Enseign. Math. \textbf{31} (Kundig, Geneva, 1983), 233-246.

%
%
%
%
%
%

\bibitem{Sei}
	P.~Seidel: {\em A long exact sequence for symplectic Floer cohomology}, Topology \textbf{42} (2003), no.~5, 1003-1063, arXiv:math/0105186.


\end{thebibliography}
\end{document}